\newtheorem{theorem}{Theorem}[section]
\newtheorem{definition}[theorem]{Definition}
\newtheorem{lemma}[theorem]{Lemma}
\newtheorem{prop}[theorem]{Proposition}
\newtheorem{proposition}[theorem]{Proposition}
\newtheorem{corollary}[theorem]{Corollary}
\newtheorem{conjecture}[theorem]{Conjecture}
\newtheorem{hypothesis}[theorem]{Hypothesis}
\renewcommand{\epsilon}{\varepsilon}
\DeclareMathAlphabet{\mathpzc}{OT1}{pzc}{m}{it}
\newcommand{\Z}{\mathbb{Z}}
\newcommand{\C}{\mathbb{C}}
\newcommand{\CP}{\mathbb{P}}
\renewcommand{\qed}{$\hfill \square$ \smallskip \\}
\renewcommand{\phi}{\varphi}
\newcommand{\Hom}{\text{Hom}}
\newcommand{\M}{\mathscr{M}}
\newcommand{\tr}{\text{tr}}
\begin{document}
\thispagestyle{empty}
\title[The quantum sl(N) graph invariant and a moduli space]{The quantum sl(N) graph invariant and a moduli space}
\author{Andrew Lobb \\ Raphael Zentner}

\begin {abstract} 
We associate a moduli problem to a colored trivalent graph; such graphs, when planar, appear in the state-sum description of the quantum $sl(N)$ knot polynomial due to Murakami, Ohtsuki, and Yamada.  We discuss how the resulting moduli space can be thought of a representation variety.  We show that the Euler characteristic of the moduli space is equal to the quantum $sl(N)$ polynomial of the graph evaluated at unity.  Possible extensions of the result are also indicated.
\end {abstract}

\address{Dept of Mathematical Sciences \\ Durham University \\ Science Laboratories \\ South Rd. \\ Durham DH1 3LE \\ UK}
\email{andrew.lobb@durham.ac.uk}
\address {Mathematisches Institut \\ Universit\"at zu K\"oln \\ Weyertal 86-90 \\ D-50931 K\"oln \\ Deutschland}
\email{rzentner@math.uni-koeln.de}

\maketitle

\section*{Introduction}
In their paper \cite{MOY}, Murakami, Ohtsuki, and Yamada gave an interpretation of the quantum $\mathfrak{sl}(N)$ invariant of a knot or link via a state sum model. This is a generalization of the case $N=2$ which is Kauffman's interpretation of the Jones polynomial via the Kauffman bracket.  In MOY's work, however, the states of an oriented knot diagram are now planar oriented colored trivalent graphs.  Based on this model, Khovanov and Rozansky have given link homology theories \cite{KR1} that categorify the $\mathfrak{sl}(N)$ knot invariant.

There are relationships between these quantum invariants and other invariants of knots and links, such as the knot group, representation spaces of knot groups, or the various Floer homology theories associated to knots and links.  Based on observations on $(2,2p+1)$ torus knots, Kronheimer and Mrowka \cite{KM2} have given a relationship between Khovanov homology, which appears as the $N=2$ case of the above mentioned homology theories, and (singular) instanton knot Floer homology

In this paper we relate MOY's polynomial $P_N(\Gamma)$ of colored trivalent graphs $\Gamma$ to a certain moduli space of decorations of the graph which is itself a space of representations of the graph complement in $SU(N)$ with meridional conditions on the edges of the graphs.  Our moduli space can be considered as a subspace of a product of projective spaces, which leads us to think our moduli space may have some relation with the construction of a categorification of the $\mathfrak{sl}(N)$ polynomial by Cautis and Kamnitzer \cite{CK}.
\\

In the first section we state our result and give context for it in terms of a conjectural picture for higher rank instanton knot floer homology, the second section contains the proof of the result.

\subsection*{Acknowledgements}
We thank Hans Boden, Sabin Cautis, and Dmitri Panov for useful general discussions, and Saugata Basu for answering some questions of ours on real varieties.  We thank CIRM and their Research in Pairs program for providing the environment in which this paper was written.

\section{Background and Results}

\subsection{Colored trivalent graphs and MOY moves}
In \cite{MOY}, Murakami, Ohtsuki, and Yamada give an invariant $P_n(\Gamma)$ of colored trivalent planar graphs $\Gamma$, taking values in the ring $\mathbb{Z}[q, q^{-1}]$ with non-negative coefficients, which determines the quantum invariant of a oriented knot colored with the fundamental representation of $\mathfrak{sl}(N)$ as a state sum via the relationship in Figure~\ref{MOYtoknot}.

\begin{figure}
\centerline{
{
\psfrag{thing1}{$= q^{1-n}$}
\psfrag{thing2}{$-q^{-n}$}
\psfrag{thing3}{$=q^{n-1}$}
\psfrag{thing4}{$-q^n$}
\psfrag{2}{$2$}
\psfrag{x4}{$x_4$}
\psfrag{T+(D)}{$T^+(D)$}
\includegraphics[height=2in,width=3.5in]{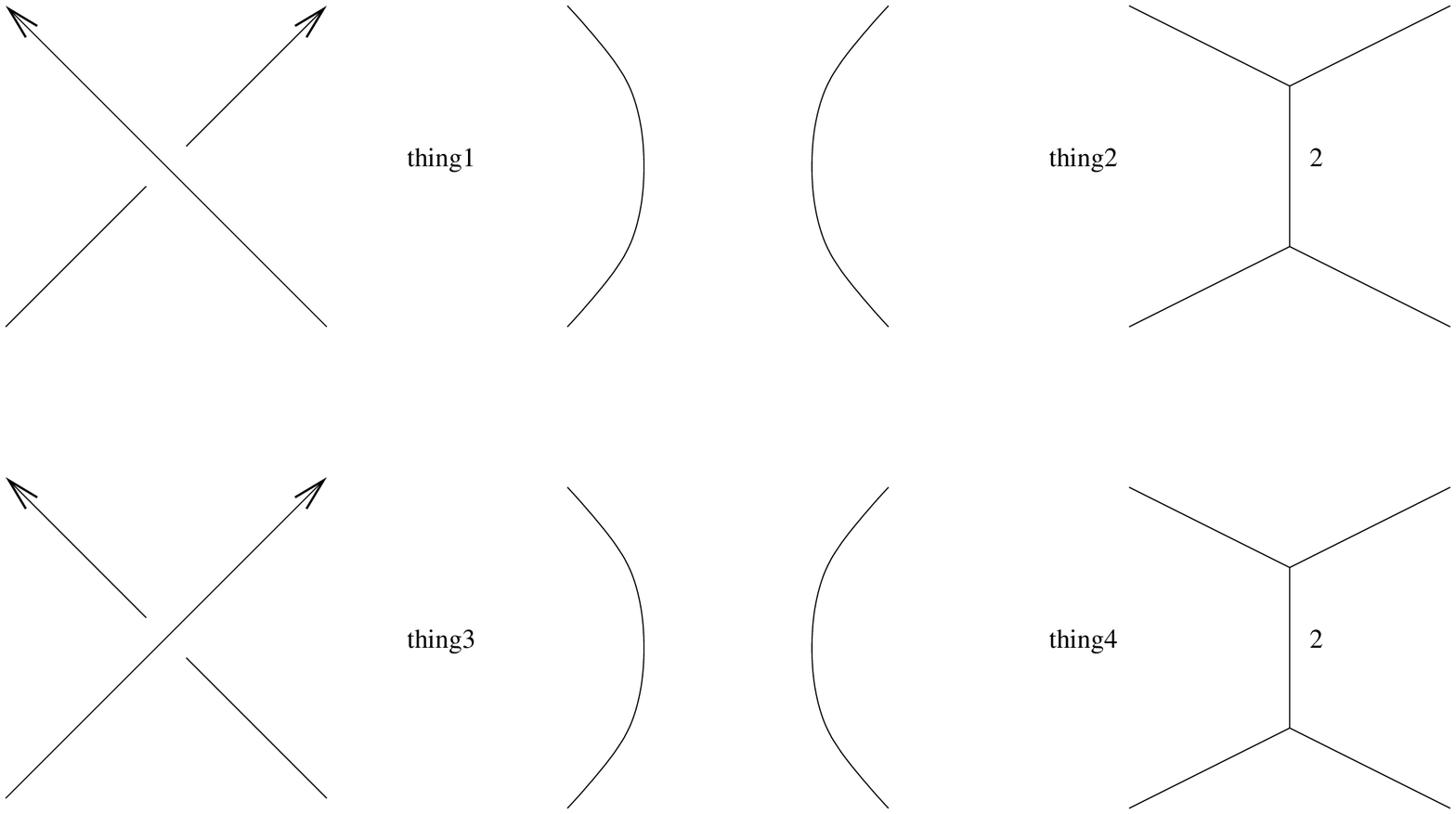}
}}
\caption{We show how the MOY polynomials associated to colored planar trivalent graphs can be used to compute the quantum $\mathfrak{sl}(N)$ polynomials of the knot.  On the left hand side of the local equations are knot diagrams, on the right hand side are trivalent graphs.  We have indicated the edges on the right hand side that are colored with $2$, all other edges on the right hand side are colored with $1$.}
\label{MOYtoknot}
\end{figure}

In Figure \ref{MOYtoknot} the `colors' are drawn from the set $\{ 1,2 \}$ and correspond to associating to each edge of the graph either the fundamental $N$-dimensional representation $V$ of $\mathfrak{sl}(N)$ or the representation $V \wedge V$.  It is important that we restrict the colorings so that meeting at any vertex there are two edges colored with $1$ and one edge colored with $2$.

More generally, in \cite{MOY}, all colors $\wedge^i V$ for $1 \leq i \leq N-1$ are considered, giving state sum interpretations of the quantum $\mathfrak{sl}(N)$ invariants of knots colored with any antisymmetric representation of $\mathfrak{sl}(N)$.  We expect the results of this paper to generalize without too much difficulty to these situations, but here we shall mainly be concerned with the two colors required to give a state sum interpretation of the quantum $\mathfrak{sl}(N)$ polynomial for a knot colored with the standard representation of $\mathfrak{sl}(N)$ as in Figure \ref{MOYtoknot}.

In \cite{MOY} it is important that the graphs $\Gamma$ considered admit a consistent orientation of the edges.  Namely at any trivalent vertex of $\Gamma$ the two 1-colored edges either both point in or both point out, and the 2-colored edge does the opposite.  Another way of saying this is that the total \emph{flux} into a trivalent point (counting the colors as a quantity of flux) should be zero; this is how the consistency condition generalizes to higher antisymmetric powers.  Note that trivalent graphs arising as states of an oriented knot diagram inherit a consistent orientation from the orientation of the knot (see Figure \ref{MOYtoknot}).  From now on whenever we write \emph{colored trivalent graph} we will only mean graphs satisfying such a condition.

The polynomial $P_N(\Gamma)$ associated by Murakami, Ohtsuki, and Yamada to a colored trivalent graph $\Gamma$ can be computed by use of the \emph{MOY moves}.  These are local relationships that look much like the Reidemeister moves.  We give these moves in the next section where we will see that they hold also (up to a shift in some cases or after evaluation at unity in all cases) for the Euler characteristic of a certain moduli space associated to a colored trivalent graph.

\subsection{A moduli problem and the main theorem}

Given a colored trivalent graph $\Gamma$ where each edge is either colored with either $1$ or $2$ as in the last section, decorate each edge colored with $1$ by a point of complex projective $(N-1)$-space $\mathbb{P}^{N-1}$, and each edge colored with $2$ by a point of $\mathbb{G}(2,N)$, the Grassmannian of 2-planes in $\mathbb{C}^N$.  We call this decoration \emph{admissible} if the edge decorations at any trivalent vertex correspond to two orthogonal lines in $\mathbb{C}^N$ and the plane that they span.

\begin{definition}
\label{admissdef}
The set of all admissible decorations of such a colored planar trivalent graph $\Gamma$ forms a moduli space which we denote $\M(\Gamma)$.
\end{definition}

There is a natural generalization of this definition which associates a moduli space to a colored trivalent graph with colors drawn from the set $\{1,2, \ldots, N-1 \}$ such that at any vertex the three edges are colored by $a$, $b$, and $a+b$.  In this situation admissible decorations are decorations of each $a$-colored edge with a point in $\mathbb{G}(a,N)$ such that at at any vertex the three decorations consist of two orthogonal subspaces of $\C^N$ and the subspace that they span.

As we are mainly motivated by the invariants of knots colored with the standard representation of $\mathfrak{sl}(N)$, we shall mostly restrict our attention to the case when we draw colors from the set $\{1,2\}$.  We expand this to the set $\{1,2,3\}$ when doing so is convenient in Subsection \ref{MOYIVsect}.

\begin{figure}
\centerline{
{
\psfrag{a}{$a$}
\psfrag{b}{$b$}
\psfrag{thing3}{$=q^{n-1}$}
\psfrag{thing4}{$-q^n$}
\psfrag{2}{$2$}
\psfrag{x4}{$x_4$}
\psfrag{T+(D)}{$T^+(D)$}
\psfrag{c}{$c$}
\psfrag{d}{$d$}
\psfrag{e}{$e$}
\psfrag{f}{$f$}
\psfrag{g}{$g$}
\psfrag{h}{$h$}
\includegraphics[height=2.5in,width=2in]{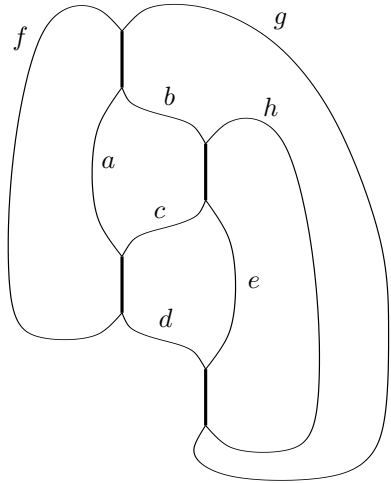}
}}
\caption{We draw a colored graph $G$.  In $G$ the edges colored with $2$ are straight and vertical and have been drawn slightly thicker than the edges colored with $1$.  The labels $a,b,c,d,e,f,g$ refer to points of projective space $\CP^2$ decorating the $1$-colored edges.}
\label{MOYbraidexample}
\end{figure}

In Figure \ref{MOYbraidexample} we draw a colored graph $G$.  As an example, we now determine $H_*(\M(G))$ in the case $N=3$.  In the discussion we sometimes write the ``span'' of points in projective space: we mean the projectivization of the span of the corresponding subspaces of $\C^N$.

\begin{definition}
Given a space $X$ with homology finitely generated and of bounded degree, we write $\pi(X)$ for the Poincare polynomial of the homology of the space:

\[ \pi(X) = \sum_i q^i \dim{H_i(X)} \in \mathbb{Z}[q] {\rm .} \]
\end{definition}

\begin{prop}
\label{countereg}
For the graph $G$ as drawn in Figure \ref{MOYbraidexample} we have

\[ \pi(\M(G)) = (1+3q^2)(1+q^2)(1+q^2+q^4) {\rm .} \]
\end{prop}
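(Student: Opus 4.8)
The plan is to realize $\M(G)$ explicitly as a subvariety of a product of copies of $\CP^{2}$ and then compute its cohomology by breaking the maps that forget one edge-decoration at a time into a tower of fibrations. The first step is to eliminate the $2$-colored data. Since $N=3$, the orthogonal complement identifies $\mathbb{G}(2,3)$ with $\CP^{2}$, and admissibility forces the decoration of each $2$-colored edge to be the span of the two orthogonal lines meeting it. Hence every $2$-colored decoration is \emph{determined} by the $1$-colored lines, and $\M(G)$ sits inside $\prod_{e}\CP^{2}$, with one factor $\CP^{2}$ for each $1$-colored edge $e$, cut out purely by the conditions that the two lines at each vertex be orthogonal and that the spans of consecutive orthogonal pairs sharing a $2$-colored edge agree. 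I would read these relations off directly from Figure~\ref{MOYbraidexample} and record, for the chosen orientation/ordering of edges, which line decorations are genuinely free, which are constrained to lie in an already-determined plane, and which are forced to equal the orthocomplement of a determined plane.

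Next I would build $\M(G)$ as an iterated construction by adding the edge decorations in a convenient order and analyzing the successive projections. A free line contributes a fiber $\CP^{2}$ (Poincar\'e polynomial $1+q^{2}+q^{4}$); a line required to lie in a known $2$-plane, or to be orthogonal to one already-placed line, contributes a fiber $\CP^{1}$ (Poincar\'e polynomial $1+q^{2}$); a line forced to equal the orthocomplement of a known plane contributes a single point. Wherever such a projection is a genuine fiber bundle, the base is an iterated projective bundle, so its cohomology is free and concentrated in even degrees, the Serre spectral sequence degenerates, and Leray--Hirsch lets me multiply Poincar\'e polynomials. This mechanism is exactly what produces the ``clean'' factors $(1+q^{2}+q^{4})$ and $(1+q^{2})$ in the answer.

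The crux, and the main obstacle, is the factor $1+3q^{2}$, which is \emph{not} a product of quantum integers and so cannot come from a projective-bundle step; this is precisely what makes $G$ a counterexample to the naive guess $\pi(\M(G))=P_{3}(G)$. The phenomenon I expect is a projection that fails to be locally trivial because its fiber jumps: a line that is generically pinned to a point (the orthocomplement of a plane spanned by two constraining lines) acquires a whole $\CP^{1}$ of choices over the locus where those two constraining lines coincide. I would handle this by stratifying the base into the open locus where the constraining data is in general position and the closed degeneracy locus, computing the homology of the total space over each stratum, and splicing the two with a Mayer--Vietoris argument (equivalently, recognizing the degenerate total space as a chain of three $\CP^{1}$'s glued at points, whose Poincar\'e polynomial is $1+3q^{2}$). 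The delicate points are to check that no odd-degree cohomology is introduced at the jump locus, so that everything stays even and formal, and to verify that exactly three independent degree-$2$ classes survive. As a consistency check I would confirm that the resulting total Poincar\'e polynomial evaluates to $4\cdot 2\cdot 3=24$ at $q=1$, matching the value at unity predicted for the Euler characteristic by the main theorem.
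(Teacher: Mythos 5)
Your proposal is correct and follows essentially the same route as the paper: two genuine projective-bundle steps (fibering over $\CP^2$ by evaluation at one free line and then over a $\CP^1$ of orthogonal choices) contributing the factors $(1+q^2+q^4)$ and $(1+q^2)$, followed by a stratified analysis of the remaining piece in which the fiber jumps from a point to a $\CP^1$ over successive degeneracy loci, yielding a chain of three $\CP^1$'s glued at points --- homotopy equivalent to $\vee^3\CP^1$ with Poincar\'e polynomial $1+3q^2$ --- and finally degeneration of the Serre spectral sequence by evenness. The paper identifies that last piece by direct case analysis rather than an explicit Mayer--Vietoris splice, but this is a cosmetic difference.
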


\begin{proof}
If we consider $f \in \CP^2$ as fixed while all other decorations are allowed to vary over $\CP^2$ we get a moduli space that we call $\M_1$.  By evaluation of $f$ we see that $\M(G)$ is a fiber bundle over $\CP^2$ with fiber $\M_1$.

With $f$ fixed, we see that $g$ is orthogonal to $f$, and so must lie in a $\CP^1$.  Writing $\M_2$ for the moduli space in which we fix both $f$ and an orthogonal $g$, we see that $\M_1$ is a fiber bundle over $\CP_1$ with fiber $\M_2$.

We focus on $\M_2$.  Observe that $h$ has to lie in the $\CP^1$ consisting of points orthogonal to $g$.  In the case that $h$ is not the unique point of this $\CP^1$ that is also orthogonal to $f$, there is a unique admissible decoration of the rest of the graph, namely $a=f$, $b=c=d=g$, $e=h$.

Suppose now that $h$ is orthogonal to both $f$ and $g$.  The point $e$ has to lie in the $\CP^1$ consisting of points in the span of $g$ and $h$.  Suppose that $e \not= h$.  Then it is easy to check that there is a unique way to decorate the rest of the graph admissibly.  In the case that $e = h$ then any possible choice of $a$ in the $\CP^1$ spanned by $f$ and $g$ determines an admissible decoration of $G$.

This shows that topologically $\M_2 = \vee^3 \CP^1$.  Since this is a space with only even-dimensional homology, as is projective space, we have triviality of all differentials in Serre's spectral sequence computing first $H_*(\M_1)$ and then $H_*(\M(G))$, hence we have the result.
\end{proof}

The motivating hypothesis for this paper is that for a general colored graph $\Gamma$, the Poincare polynomial of the homology $H_*(\M(\Gamma))$ is related to $P_N(\Gamma)$.

\begin{hypothesis}
\label{dreamthm}
We have

\[ q^C \pi(\M(\Gamma)) =  P_N(\Gamma) \rm{,}\]

\noindent for some $C \in \mathbb{Z}$.
\end{hypothesis}

Although this hypothesis looks strong when considering the first few MOY moves, it fails in general.  In fact for the graph $G$ of Proposition \ref{countereg} we have

\[ P_3(G) = (q + q^{-1})^3 (q^{2} + 1 + q^{-2}) \not= q^C \pi(\M(G)) \,\, {\rm for} \,\, {\rm any} \,\, C \in \mathbb{Z} {\rm .} \]

\noindent  Note that this also invalidates the hypothesis for `braid-like' trivalent graphs.  Instead we prove that a version of Hypothesis \ref{dreamthm} holds when restricting our attention to the Euler characteristic.

\begin{theorem}
\label{maintheorem}
For a colored planar trivalent graph $\Gamma$ we have

\[ \chi(\M(\Gamma)) = P_N(\Gamma) (1) {\rm ,} \]

\noindent where $\chi$ denotes Euler characteristic.
\end{theorem}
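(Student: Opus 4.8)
The plan is to prove the theorem by showing that both sides obey the same local calculus. In \cite{MOY} it is shown that $P_N(\Gamma)$ is the unique assignment of a Laurent polynomial to colored trivalent graphs which is multiplicative under disjoint union, assigns $\begin{bmatrix} N \\ a \end{bmatrix}$ to an $a$-colored circle, and satisfies the MOY moves. Consequently, to establish the theorem it suffices to verify that $\Gamma \mapsto \chi(\M(\Gamma))$ satisfies these same relations after the substitution $q=1$; since $P_N(\Gamma)(1)$ is computed from the moves by exactly the same recursion, the two numbers must then coincide for every $\Gamma$. The tools I would use throughout are the two basic properties of the Euler characteristic: it is multiplicative in fibre bundles, $\chi(E) = \chi(B)\,\chi(F)$, and additive over a decomposition into a closed piece and its open complement. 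Because $\M(\Gamma)$ is a closed, hence compact, subset of a product of projective spaces and Grassmannians, its compactly supported and ordinary Euler characteristics agree, so both properties may be used freely.

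First I would check the base cases. A $1$-colored circle gives $\M = \CP^{N-1}$, so $\chi = N = \begin{bmatrix} N \\ 1 \end{bmatrix}(1)$, and a $2$-colored circle gives $\M = \mathbb{G}(2,N)$, so $\chi = \binom{N}{2} = \begin{bmatrix} N \\ 2 \end{bmatrix}(1)$. Disjoint union of graphs corresponds to a product of moduli spaces, so $\chi$ is multiplicative there as required. For each MOY move I would argue exactly as in the proof of Proposition \ref{countereg}: fix the decorations on the boundary edges of the local region, view the local moduli space as a (stratified) fibration over the space of admissible boundary data, stratify that base according to the incidence relations among the fixed boundary subspaces, and on each stratum identify the fibre as a product of projective spaces, Grassmannians and points whose Euler characteristic is then a product of binomial coefficients. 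Summing these contributions by additivity gives the Euler characteristic of each side of the move; the point is that after setting $q=1$ every grading shift $q^C$ becomes trivial, so the numerical identity holds even for those moves where the graded (Poincare polynomial) version fails, as it must in view of the counterexample $G$.

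The main obstacle is the square move, MOY IV, which is genuinely more delicate than the digon and circle relations: here the local moduli space breaks into several strata indexed by the possible coincidences and orthogonality relations among the boundary subspaces, and one must check that the sum of the Euler characteristics over these strata reproduces the right-hand side of the move (a sum of simpler graphs). This is precisely the computation for which it is convenient to allow a $3$-colored edge, and I would carry it out in the enlarged calculus of Subsection \ref{MOYIVsect} before specialising back to colors $\{1,2\}$.

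Finally, I would note a conceptually cleaner route that bypasses the move-by-move verification and explains why the answer is a count. The maximal torus $T \subset U(N)$ acts diagonally on $\C^N$, hence on each $\CP^{N-1}$ and each $\mathbb{G}(a,N)$, and since a diagonal unitary preserves Hermitian orthogonality and linear spans it preserves admissibility and so acts on $\M(\Gamma)$. A generic element of $T$ fixes exactly the coordinate subspaces, so the fixed locus $\M(\Gamma)^T$ consists precisely of the admissible decorations in which every edge is labelled by a coordinate subspace; these are exactly the MOY states of $\Gamma$, a finite set. For a compact space carrying such a torus action one has $\chi(\M(\Gamma)) = \chi(\M(\Gamma)^T)$, which here equals the number of MOY states, and this number is $P_N(\Gamma)(1)$. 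The technical content of this argument is that $\M(\Gamma)$ is a compact \emph{real} algebraic variety — orthogonality is a real- but not complex-algebraic condition — so one must invoke the localization of the Euler characteristic for torus actions in the real-algebraic setting rather than the familiar complex one.
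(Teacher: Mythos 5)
Your main line of argument is the same as the paper's: reduce to the five MOY relations evaluated at $q=1$, handle the circle and the digon moves by exhibiting $\M(\Gamma)$ as a projective-space bundle over $\M(\Gamma')$, and handle the harder moves by stratifying according to coincidences among the boundary decorations. What you leave in outline is exactly where the paper spends its effort: for MOY III and MOY IV the evaluation map to the boundary data is not a fibration, and the paper makes the additivity bookkeeping rigorous by noting that $\M(\Gamma)$ is a compact \emph{real} algebraic variety, invoking Hardt triviality to replace the special fibre $V$ by a homotopy-equivalent open neighbourhood $\widetilde{V}$, and then running Mayer--Vietoris; it also carries out the fibre identifications you defer (for MOY III the fibres over the strata $a=c\neq b=d$, $a=b\neq c=d$, $a=b=c=d$ are $\CP^{N-3}$, a point, and $\CP^{N-2}$ respectively, and for MOY IV one gets a point off the locus $\{P=Q,\ a=b,\ a\perp P\}$ and a $\CP^1$ on it). Your remark that $\chi_c=\chi$ on the compact total space, so that one may sum compactly supported Euler characteristics over locally closed semialgebraic strata, is a legitimate and slightly slicker substitute for the Mayer--Vietoris argument, but it needs to be said at the level of the strata (which are non-compact), not just of $\M(\Gamma)$. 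Your final paragraph, by contrast, is a genuinely different proof that the paper does not contain: localize the diagonal torus action, identify $\M(\Gamma)^T$ with the finite set of MOY states (admissible decorations by coordinate subspaces), and use that each MOY state contributes a monomial with coefficient $1$ so that the state count is $P_N(\Gamma)(1)$. This is attractive because it explains \emph{why} the Euler characteristic is a count and avoids the move-by-move verification entirely; its cost is that one must justify $\chi(X)=\chi(X^T)$ for a compact torus acting on a compact real (not complex) variety --- true, e.g., because such an $X$ admits a finite $T$-CW structure and free orbits contribute zero --- and must verify that a generic element of $T$ fixes only coordinate subspaces of every $\mathbb{G}(a,N)$ involved. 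Modulo writing out those two points, the localization argument is complete and arguably cleaner than the published proof.
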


\noindent We believe in fact that the homology of $\M(\Gamma)$ is supported in even degrees, but we are not yet able to show this.

\begin{conjecture}
\label{evenness}
For any graph $\Gamma$ and for $i$ odd we have $H_i(\M(\Gamma)) = 0$.
\end{conjecture}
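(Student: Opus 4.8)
The plan is to generalize the stratification-and-fibration argument used to prove Proposition \ref{countereg}, running it as an induction on the number of vertices of $\Gamma$ and exploiting the fact that spaces with homology concentrated in even degrees glue together well. The base of the induction is a single trivalent vertex: there the moduli space is the set of ordered pairs of orthogonal lines in $\C^N$ together with their span, which is exactly the partial flag manifold $\{\ell \subset P : \dim\ell = 1,\ \dim P = 2\}$ realized as the homogeneous space $U(N)/(U(1)\times U(1) \times U(N-2))$. This is a smooth complex projective variety, so its homology sits in even degrees only. More generally I would like to see every $\M(\Gamma)$ built up, stratum by stratum, out of pieces of this even kind.

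For the inductive step I would choose a $1$-colored edge of $\Gamma$ decorated by a line $\ell$ and consider the evaluation map $\M(\Gamma) \to \CP^{N-1}$ recording $\ell$. Over this map I would stratify $\M(\Gamma)$ according to the incidence pattern of the remaining decorations with $\ell$ and with one another, just as in the proof of Proposition \ref{countereg} one separated the generic locus from the loci where $e=h$ or where $h$ is orthogonal to $f$. The aim is to show that the open, generic stratum is a fiber bundle whose base is a product of projective spaces and Grassmannians (hence even) and whose fiber is the moduli space $\M(\Gamma')$ of a graph $\Gamma'$ with fewer vertices, even by the inductive hypothesis, while the closed complementary strata are themselves moduli spaces of smaller graphs, again even by induction. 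Here one uses that the orthogonal-complement bundle $\ell \mapsto \ell^\perp$ over $\CP^{N-1}$ is smoothly isomorphic to the holomorphic quotient bundle, so that the relevant fibrations have structure group acting complex-linearly and the bundles are even.

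Granting such a decomposition, the conclusion would follow purely from parity. Each stratum has homology concentrated in even degrees, so in the Serre spectral sequences of the fibrations the second page is supported in bidegrees $(p,q)$ with $p$ and $q$ both even, while in the long exact sequences of the pairs every term is a homology group of even total degree. Since every differential and every connecting homomorphism changes total degree by one, it must land in a group forced to vanish; the sequences degenerate and evenness is inherited by $\M(\Gamma)$. This is precisely the mechanism by which the proof of Proposition \ref{countereg} concluded that the Serre spectral sequences have trivial differentials, and it is robust to the singularities (such as the wedge points that make $\M(G) = \vee^3 \CP^1$ fail to be a manifold) that necessarily appear.

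The hard part, and the reason this is stated only as a conjecture, is establishing the inductive stratification \emph{uniformly} across all graphs. At a shared $1$-colored edge the line $\ell$ is forced to be orthogonal to several lines coming from distinct vertices, and these constraints interact in ways that a single evaluation map does not control: the fibers need not be moduli spaces of genuinely smaller graphs, and the loci where decorations collide may fail to be transverse, so the naive strata are neither smooth nor obviously of the required even type. Making the induction go through seems to demand either a clever global ordering of the edges along which to peel off decorations, or a reinterpretation of $\M(\Gamma)$ as homeomorphic to a complex projective variety cut out by holomorphic incidence conditions inside a product of flag manifolds, after which one could hope for an affine paving or a Bialynicki--Birula decomposition coming from the residual diagonal $(S^1)^N$ action, whose fixed points are the isolated coordinate decorations and whose Morse indices are even. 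Each of these routes, however, stumbles on the singularities of $\M(\Gamma)$ and on the combinatorics of the interacting orthogonality conditions, which is exactly where a proof currently eludes us.
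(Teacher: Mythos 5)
You should first be aware that the paper offers no proof of this statement: it is stated as Conjecture \ref{evenness}, and the authors say explicitly that they ``are not yet able to show this.'' So there is no argument in the paper to compare yours against, and your closing admission that the inductive stratification ``eludes us'' means your proposal is, like the paper, a strategy rather than a proof. That said, it is worth pinpointing where the strategy itself is unsound, beyond the gap you acknowledge. The parity-propagation mechanism in your third paragraph is false as a general principle: evenness of homology, unlike the Euler characteristic, is \emph{not} additive over stratifications. If $V \subset M$ with both $M$ and $V$ having homology in even degrees, the open stratum $M \setminus V$ and the relative groups $H_*(M,V)$ can perfectly well be odd --- take $M = \CP^1$ and $V$ a pair of points, so that $M \setminus V \simeq S^1$. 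The connecting homomorphisms in the long exact sequence of the pair are not ``forced to vanish''; they are exactly where odd classes can be created or killed. The degeneration argument is valid only for genuine fibrations with even base and even fiber (Serre spectral sequence with $E_2$ concentrated in even bidegrees), which is what the paper has for MOY moves I and II and inside Proposition \ref{countereg}. For MOY moves III and IV the paper obtains only stratifications, not fibrations, and this is precisely why Propositions \ref{hisonalwhatsup} and \ref{corblimey} retreat to Euler characteristics, where additivity over strata (via Mayer--Vietoris and Hardt triviality) does hold. Moreover, the failure of Hypothesis \ref{dreamthm} exhibited by Proposition \ref{countereg} --- where $P_3(G) \neq q^C \pi(\M(G))$ --- shows that the homology of $\M(\Gamma)$ does not satisfy the MOY relations, so no induction organized around peeling off MOY-type local subgraphs can simultaneously track homology; your fibers over the generic stratum genuinely fail to be moduli spaces of smaller graphs in the way the induction needs, as you suspected.

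Your fallback routes also collide with an obstruction the paper records: the moduli space is cut out by Hermitian orthogonality conditions, which are \emph{real} polynomial conditions, not holomorphic ones (this is stated in the proof of Proposition \ref{hisonalwhatsup}). Hence $\M(\Gamma)$ is a compact real variety, not visibly a complex projective variety, so an affine paving or a Bialynicki--Birula decomposition --- which requires a holomorphic $\C^*$-action on a complex variety, not merely the compact $(S^1)^N$-action you invoke --- is not available without first proving a nontrivial homeomorphism to a complex incidence variety (of the Cautis--Kamnitzer sort the paper alludes to), which is essentially the whole difficulty. One small further correction: a single trivalent vertex is not a closed colored graph; the correct base case for your induction is the theta graph, whose moduli space is the partial flag manifold $U(N)/(U(1)\times U(1)\times U(N-2))$ as you say, and that part is fine. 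In summary, your sketch correctly reproduces the even-fibration mechanism that works for MOY moves 0, I, II, correctly identifies the base case, and honestly flags the inductive step as open; but the claim that parity survives the singular strata via exact sequences is wrong in principle, and repairing it is not a technical refinement but the actual content of the conjecture.
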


\noindent Clearly this conjecture and Theorem \ref{maintheorem} would together imply that Hypothesis \ref{dreamthm} holds when setting $q=1$.

\subsection{Relation to representation spaces}
The complement of a planar graph $\Gamma$ in $S^3$ is homotopy equivalent to a wedge of $k$ circles, where $k$ is equal to the first Betti number of $\Gamma$. Therefore, its fundamental group $G_\Gamma$ is isomorphic to the $k$-fold free product of $\Z$, and the space of homomorphisms $\Hom(G_\Gamma,SU(N))$ is just the $k$-fold product of $SU(N)$. Nonetheless, for a planar trivalent graph this homotopy equivalence does not suggest the most natural generators. Instead we use the presentation given in the following Lemma.

\begin{lemma}
The group $G_\Gamma = \pi_1(S^3 \setminus \Gamma)$ admits a presentation given by 
\[
	\langle \, x_1, \dots, x_m \ | \ R_1, \dots, R_c \ \rangle \ , 
\]
where $m$ is the number of edges, $x_i$ represents a positively oriented meridian to the $i^{th}$ edge, where $c$ is the number of trivalent vertices, and where the relations $R_i$ are the obvious relations at each trivalent vertex. 
\end{lemma}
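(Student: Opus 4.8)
The plan is to prove this as a Wirtinger-type presentation of a spatial-graph complement, specialised to the feature that a planar $\Gamma$ has a crossingless diagram. Place $\Gamma$ in the standard two-sphere $S^2 \subset S^3$, write $W = S^3 \setminus \Gamma$ and fix a basepoint $*$ off $S^2$; let $B^+, B^-$ be the two closed balls bounded by $S^2$. The argument splits into three ingredients: (i) the meridians $x_1, \dots, x_m$ generate $\pi_1(W, *)$; (ii) at each trivalent vertex the three incident meridians satisfy one ``obvious'' relation $R_i$; and (iii) these $c$ relations are a complete set. Planarity enters through the fact that the diagram of $\Gamma$ is its own crossingless image in $S^2$, so that no crossing relations appear and only the vertex relations survive.

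For (i) I would put an arbitrary based loop in general position with respect to $S^2$, so that it meets $S^2$ transversally in finitely many points of $S^2 \setminus \Gamma$ and is cut into sub-arcs lying alternately in $B^+$ and $B^-$. Since $\Gamma$ lies on the boundary sphere, each $B^\pm \setminus \Gamma$ deformation retracts to its centre (straight-line homotopy into the open ball, which misses $\Gamma$) and is therefore contractible; hence each sub-arc is determined up to homotopy rel endpoints. The loop is thus rewritten as a product of elementary ``down--up'' loops, each of which links a collection of edges and so is a product of meridians $x_i^{\pm 1}$ (with the connecting paths chosen inside the contractible ball $B^+$). This shows the $x_i$ generate.

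For (ii), around each vertex $v$ take a small sphere $S_v$ meeting $\Gamma$ transversally in the three points where the incident edges cross it. Then $S_v \setminus \Gamma$ is a thrice-punctured sphere, and in its fundamental group the product of the three puncture loops is trivial. Under the inclusion $S_v \setminus \Gamma \hookrightarrow W$ these puncture loops map to the meridians of the incident edges, and translating the sphere-orientation convention into the consistent edge-orientation of $\Gamma$ turns this into the obvious vertex relation $R_v$, e.g. $x_{a+b} = x_a x_b$ expressing the outgoing meridian as the product of the two incoming ones.

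Ingredients (i) and (ii) give a surjection $P := \langle x_1, \dots, x_m \mid R_1, \dots, R_c \rangle \twoheadrightarrow \pi_1(W)$, and the main obstacle is completeness: ruling out any further relations. Rather than analyse this directly I would settle it by a rank count. By the homotopy equivalence recalled above, $\pi_1(W)$ is free of rank $k = b_1(\Gamma) = m - c + 1$ for connected $\Gamma$. Choosing a spanning tree $T$ and orienting it towards a fixed root, each non-root relation $R_v$ can be solved for the meridian of the parent tree-edge of $v$; since $v \mapsto (\text{parent edge of } v)$ is a bijection from the $c-1$ non-root vertices onto the $c-1$ edges of $T$, processing the vertices from the leaves inwards lets Tietze elimination express those $c-1$ meridians through the others and exhibits $P$ as generated by the $k$ remaining (non-tree) meridians. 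Then the composite of the surjection $F_k \to P$ coming from these $k$ generators with $P \twoheadrightarrow \pi_1(W) \cong F_k$ is a surjective endomorphism of a finitely generated free group, hence an isomorphism; therefore $P \to \pi_1(W)$ is itself an isomorphism, which is the asserted presentation. The disconnected case follows componentwise, each component contributing exactly one redundant relation. The only step demanding genuine care is the general-position rewriting in (i), but the rank argument in the last step is what conclusively guarantees that crossinglessness leaves no relations beyond the $c$ vertex ones.
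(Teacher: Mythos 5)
The paper offers no proof of this lemma --- it is stated and immediately marked as proved, the authors evidently regarding it as a standard Wirtinger-type fact for a crossingless diagram --- so there is nothing to compare your argument against except the standard folklore it encodes. Your proof is correct and supplies the missing details. The generation and vertex-relation steps are the expected ones: contractibility of each complementary ball minus $\Gamma$ (your straight-line retraction works because $\Gamma$ lies entirely on the boundary sphere), and the relation $y_1 y_2 y_3 = 1$ in a thrice-punctured sphere around each vertex, with all connecting paths taken in the simply connected region above the plane, which is exactly what makes the meridian of each edge a well-defined group element rather than merely a conjugacy class and hence makes the vertex relation the ``obvious'' one. The genuinely nontrivial point is completeness, and your way of handling it is clean: eliminating the $c-1$ spanning-tree meridians by Tietze moves ordered from the leaves of the rooted tree inward (each non-root vertex relation contains the parent-edge meridian exactly once, since a tree edge is never a loop at its lower endpoint), so that the presented group $P$ is generated by $k = m - c + 1$ elements, and then invoking the Hopfian property of finitely generated free groups to conclude that the surjection $P \twoheadrightarrow \pi_1 \cong F_k$ is an isomorphism. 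This avoids any direct van Kampen computation over the faces of the diagram. The only external facts you import are the freeness of $\pi_1(S^3 \setminus \Gamma)$ of rank $b_1(\Gamma)$, which the paper itself records in the sentence preceding the lemma, and general position; I see no gap.
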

\qed
We can think, once and for all, that the basepoint is fixed somewhere {\em above} the plane of the graph, e.g. in the eye of the observer. Ignoring the dependence on the basepoint in the sequel will make everything well-defined up to global conjugation. 
\\

We are motivated by the perspective of a potential relationship between the $\mathfrak{sl}(N)$ knot homology theory of Khovanov and Rozansky \cite{KR1} and the instanton Floer homology associated to the group $SU(N)$, as developed by Kronheimer and Mrowka in \cite{KM1}. Therefore, we are only looking at representations for which a certain condition on the conjugacy class of a meridian is satisfied. In \cite[Section 2.5 Example (ii)]{KM1} a coherent condition is that the conjugacy class of a meridian $m$ is sent by a representation $\rho : G_\Gamma \to SU(N)$ to the conjugacy class of the element $\Phi_1$ given by 

\[
\Phi_1:= \, \zeta \, \begin{pmatrix} -1 & 0 & 0 & \dots & 0 \\
								0 & 1 & 0 & \dots & 0 \\
								0 & 0 & 1 & \dots & 0 \\
								\vdots & \vdots & \vdots & \ddots & 0 \\
								0 & 0 & \dots & 0 & 1 
								\end{pmatrix} \ , 
\]
where $\zeta$ is a primitive $N^{th}$ root of $-1$, e.g. $\zeta = \exp(i \pi / N)$. We shall also need another special element of $SU(N)$:
\[
\Phi_2:= \, \zeta^2 \, \begin{pmatrix} -1 & 0 & 0 & \dots & 0 \\
								0 & -1 & 0 & \dots & 0 \\
								0 & 0 & 1 & \dots & 0 \\
								\vdots & \vdots & \vdots & \ddots & 0 \\
								0 & 0 & \dots & 0 & 1 
								\end{pmatrix} \ . 
\]

Notice that for $N=2$ the corresponding meridional condition is satisfied with either orientation on the edge, in contrast to the situation where $N \geq 3$. 
\\

For the purpose of this section, we shall give an orientation of the 2-colored edges. We orient the 2-colored edge coherently so that we can think of the edges coming with a flux of magnitude given by the color of the edge, and so that there are no sources or sinks of flux at any point of the graph. 

We now define a subspace of $\Hom(G_\Gamma,SU(N))$ that we will relate to the moduli space of decorations considered above.
\begin{definition}
Suppose we are given a trivalent oriented graph with 1-colored and 2-colored oriented edges. We denote by $R_{\Phi_1,\Phi_2}(G_\Gamma;SU(N))$ the space of homomorphisms $\rho: G_\Gamma \to SU(N)$ such that for any oriented meridian $m$ to an oriented 1-colored edge $e$, and to any oriented meridian $n$ to a 2-colored edge $E$  we have
\begin{equation*}
	\rho(m) \sim \Phi_1 \, \ \ \text{and} \ \ \, \rho(n) \sim \Phi_2 \ ,
\end{equation*}
i.e. $\rho(m)$ is conjugated to $\Phi_1$ inside $SU(N)$ and so is $\rho(n)$ to $\Phi_2$.
\end{definition}

Suppose now that we are given a trivalent oriented graph $\Gamma$ in the plane with the condition that at any trivalent vertex the two bounding oriented 1-colored edges have either both its endpoint or both its starting point. 
There is a natural map 
\begin{equation}
\label{art deco}
D: R_{\Phi_1,\Phi_2}(G_\Gamma;SU(N))  \to  M(\Gamma)
\end{equation} 
defined in the following way. To a representation $\rho$ we are going to associate the following decoration of $\Gamma$: Let $e$ be an oriented 1-colored edge and $m$ an oriented meridian of $e$. We decorate $e$ by the 1-dimensional eigenspace of $\rho(m) \in SU(N)$ associated to the eigenvalue $-\zeta$. Let $E$ be a 2-colored edge and $n$ an oriented meridian of $E$. We decorate the edge $E$ by the 2-dimensional eigenspace of $\rho(n) \in SU(N)$ associated to the eigenvalue $-\zeta^2$. 

That this map is indeed well-defined is a consequence of the following lemma and the imposed requirement on the orientations of the edges that meet at trivalent vertices.

\begin{lemma}
Let $S, T \in SU(N)$ be two elements that are both conjugate to the element $\Phi_1$ above. Then the composition $ST$ is conjugate to $\Phi_2$ if and only if the $(-\zeta)$ eigenspaces of $S$ and $T$ are orthogonal. 
\end{lemma}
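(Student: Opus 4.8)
The plan is to exploit the elementary structure of $\Phi_1$. Since $\zeta^{-1}\Phi_1 = \mathrm{diag}(-1,1,\dots,1)$ is a reflection, any $S$ conjugate to $\Phi_1$ can be written as $S = \zeta R_S$, where $R_S = I - 2P_S$ is the Householder reflection determined by the rank-one orthogonal projection $P_S$ onto the $(-\zeta)$-eigenspace $L_S = \ker(S + \zeta I)$; likewise $T = \zeta R_T$ with $R_T = I - 2P_T$ and $L_T = \ker(T + \zeta I)$. Then $ST = \zeta^2 R_S R_T$, so the entire question becomes: for which pairs of lines $L_S, L_T$ is $\zeta^2 R_S R_T$ conjugate to $\Phi_2 = \zeta^2\,\mathrm{diag}(-1,-1,1,\dots,1)$?

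First I would record that conjugacy of unitary matrices in $SU(N)$ is detected exactly by the spectrum with multiplicities: two unitaries with equal spectra are conjugate by some $U \in U(N)$, and rescaling $U$ by a central phase (which does not affect the conjugation) adjusts the determinant so that the conjugating element may be taken in $SU(N)$. Thus it suffices to compare eigenvalue data. The eigenvalues of $\Phi_2$ are $-\zeta^2$ with multiplicity $2$ and $\zeta^2$ with multiplicity $N-2$, so I must show that $R_S R_T$ has eigenvalue $-1$ with multiplicity $2$ and $+1$ with multiplicity $N-2$ precisely when $L_S \perp L_T$.

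Next I would reduce the dimension. Setting $W = L_S + L_T$, both $P_S$ and $P_T$ map into $W$, so both reflections preserve $W$ and act as the identity on $W^\perp$. Hence $R_S R_T$ is the identity on the $(N-2)$- or $(N-1)$-dimensional space $W^\perp$, contributing the eigenvalue $+1$ there, and everything of interest happens on the at-most-two-dimensional $W$. Choosing unit vectors $u, v$ spanning $L_S, L_T$ and an orthonormal basis $\{u, w\}$ of $W$ in the case $\dim W = 2$, a direct computation gives that $R_S R_T|_W$ has determinant $1$ and trace $4|\langle u, v\rangle|^2 - 2$, so its two eigenvalues are the roots of $\lambda^2 - (4|\langle u, v\rangle|^2 - 2)\lambda + 1$.

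The heart of the argument, and the only place a genuine computation is needed, is then purely algebraic. The two eigenvalues on $W$ are both $-1$ — so that $ST|_W$ has both eigenvalues equal to $-\zeta^2$, matching $\Phi_2$ — if and only if the trace equals $-2$, i.e. $|\langle u, v\rangle| = 0$, which is exactly orthogonality of $L_S$ and $L_T$. For the converse I would observe that if the lines are not orthogonal then $|\langle u, v\rangle| > 0$ forces the trace above $-2$, so at least one eigenvalue of $ST$ on $W$ differs from $-\zeta^2$ and the spectrum cannot match that of $\Phi_2$; the degenerate case $L_S = L_T$, where $R_S R_T = I$ and $ST = \zeta^2 I$, is visibly not conjugate to $\Phi_2$ and is consistent, since equal lines are not orthogonal. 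I expect the main obstacle to be the bookkeeping in the complex (Hermitian) setting: one must be careful that it is the modulus $|\langle u, v\rangle|$, rather than a signed real angle as in the real reflection picture, that governs the eigenvalues, and that the residual phase freedom in the choice of $u$, $v$, $w$ does not affect the conclusion.
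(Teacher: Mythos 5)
Your proof is correct and rests on the same key computation as the paper's: the trace identity $\tr(ST) = \zeta^2\left(N-4+4|\langle u,v\rangle|^2\right)$, which the paper invokes only as an unspecified ``explicit computation'' and which you carry out concretely via the Householder-reflection decomposition $S=\zeta(I-2P_S)$, $T=\zeta(I-2P_T)$. The additional scaffolding you supply (reduction to $W=L_S+L_T$, the determinant, and the fact that spectrum determines conjugacy in $SU(N)$) makes the write-up self-contained for both directions, but it is essentially the same argument.
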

\begin{proof}
We just have to prove the `only if' statement as the other direction is trivial. Observe that we have 
\[
	\tr(\Phi_2) = \zeta^2 (-2 + (N-2)) = \zeta^2 (N-4) \ , 
\]
\noindent and that the trace is clearly an invariant of the conjugacy class of an element in $SU(N)$. 
On the other hand it follows from explicit computation that 
$\tr(ST)$ is equal to this value only if the two 1-dimensional eigenspaces of $S$ and $T$ are orthogonal to each other.

\end{proof}

\begin{proposition}
Suppose we are given a trivalent oriented graph $\Gamma$ in the plane with the condition that at any trivalent point the two bounding oriented 1-colored edges have either both its endpoint or both its starting point. Then the map 
\[ D: R_{\Phi_1,\Phi_2}(G_\Gamma;SU(N))  \to  \M(\Gamma) 
\]
 just defined is a homeomorphism.
\end{proposition}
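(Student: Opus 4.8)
The plan is to exploit the presentation of $G_\Gamma$ furnished by the earlier Lemma, so that a representation $\rho \in R_{\Phi_1,\Phi_2}(G_\Gamma;SU(N))$ is nothing but a choice of matrices $A_i := \rho(x_i)$, one per edge, with $A_i$ conjugate to $\Phi_1$ (for a $1$-colored edge) or to $\Phi_2$ (for a $2$-colored edge), subject to the vertex relations $R_1, \dots, R_c$. Both the source and the target of $D$ are compact Hausdorff: the source is a closed subset of the compact group $SU(N)^m$ (cut out by the closed conjugacy-class conditions and the relations), and the target $\M(\Gamma)$ is a closed subset of a product of copies of $\CP^{N-1}$ and $\mathbb{G}(2,N)$. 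Hence it suffices to show that $D$ is a continuous bijection, for then it is automatically a homeomorphism. The engine of the whole argument is the observation that an element of $SU(N)$ conjugate to $\Phi_1$ is determined by its $(-\zeta)$-eigenspace alone: being unitary it is normal, so its eigenspaces are mutually orthogonal, the eigenvalues $-\zeta$ and $\zeta$ are fixed, and the $\zeta$-eigenspace is forced to be the orthogonal complement of the $(-\zeta)$-eigenline. The same remark applies to $\Phi_2$ and its $(-\zeta^2)$-eigenplane.

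Continuity of $D$ is immediate: as a matrix ranges over a fixed conjugacy class the eigenvalue $-\zeta$ (resp.\ $-\zeta^2$) has constant multiplicity, so the associated eigenspace varies continuously, and $D$ is assembled edge-by-edge from these assignments. Injectivity follows from the engine above: if $D(\rho) = D(\rho')$ then for every generator $x_i$ the matrices $\rho(x_i)$ and $\rho'(x_i)$ share the same distinguished eigenspace and hence coincide; as the $x_i$ generate $G_\Gamma$ we get $\rho = \rho'$.

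For surjectivity I would build an explicit inverse. Given an admissible decoration, to each $1$-colored edge assign the unique matrix $A_i \in SU(N)$ acting as $-\zeta$ on the decorating line and as $\zeta$ on its orthogonal complement, and to each $2$-colored edge the unique $A_j$ acting as $-\zeta^2$ on the decorating plane and as $\zeta^2$ on its complement. A short determinant computation, using that $\zeta$ is a primitive $N^{th}$ root of $-1$ so that $\zeta^N = -1$, confirms $\det A_i = -\zeta^N = 1$ and $\det A_j = \zeta^{2N} = 1$; thus each lies in the correct conjugacy class. It remains to check that $x_i \mapsto A_i$ respects each vertex relation $R_k$. This is exactly the content of the preceding Lemma together with the computation behind it: at a vertex where two orthogonal $1$-colored lines $\ell, \ell'$ span the $2$-colored plane, the two associated matrices are simultaneously diagonalised in an orthonormal basis adapted to $\ell$, $\ell'$, and the common complement, and their product acts as $-\zeta^2$ on $\ell \oplus \ell'$ and as $\zeta^2$ on the complement, i.e.\ it is precisely the matrix attached to the $2$-colored edge. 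By the universal property of the presentation these relations suffice to define a homomorphism $\rho$, and $D(\rho)$ recovers the given decoration.

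The one genuinely delicate point is this last matching of the algebraic relations $R_k$ with the geometric admissibility conditions, and in particular the orientation bookkeeping: since for $N \geq 3$ the meridional condition is not preserved under reversing an edge, one must track the flux convention carefully to be certain that each $R_k$ is of the product form handled by the Lemma (rather than an inverse thereof), which is where the orientation hypothesis on the bounding $1$-colored edges is used. Once this is in place, the compactness of the source and the Hausdorffness of the target let us conclude that the continuous bijection $D$ is a homeomorphism without separately verifying continuity of $D^{-1}$.
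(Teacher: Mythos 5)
Your proof is correct and follows essentially the same route as the paper, which simply constructs the inverse map $P:\M(\Gamma)\to R_{\Phi_1,\Phi_2}(G_\Gamma;SU(N))$ by assigning to each decorated edge the unique unitary matrix with the prescribed eigenvalues and eigenspaces and notes the two maps are inverse. The paper's own argument is only a two-sentence sketch, so your additional details (the compact-Hausdorff reduction, the determinant check, and the verification of the vertex relations via simultaneous diagonalisation) are welcome elaborations rather than a different method.
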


\begin{proof}
It suffices to give a map $P: \M(\Gamma)   \to R_{\Phi_1,\Phi_2}(G_\Gamma;SU(N)) $ that is inverse to $D$. But the orientations of the edges associates to each element of $\M(\Gamma)$ a representation in $R_{\Phi_1,\Phi_2}(G_\Gamma;SU(N)) $, and the two maps are clearly inverses of each other. 
\end{proof}

\section{Proofs}
In this section we establish Theorem \ref{maintheorem} by considering the MOY moves, so breaking the proof of the theorem into five subsections, one for each move.  The first three of these subsections contain results in terms of fiber bundles and are directed towards Hypothesis \ref{dreamthm} and Conjecture \ref{evenness}.  Unfortunately it is not possible to prove such a result in the last case (see Proposition \ref{countereg} and the following discussion for a counterexample) so we prove results on the Euler characteristic instead.

\subsection{MOY move 0}
The $0^{th}$ MOY move, MOY0, states that the invariant $P_n(U)$ associated to a circle $U$ in the plane colored with $1$ is

\[ P_N(U) = \frac{q^N - q^{-N}}{q - q^{-1}} {\rm .}\]

\begin{prop}
\label{P1}
We have $\M(U) = \mathbb{P}^{N-1}$.
\end{prop}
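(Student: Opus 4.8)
The plan is to observe that the admissibility conditions of Definition~\ref{admissdef} are carried entirely by the trivalent vertices of the graph, and that the circle $U$ has none. First I would record that $U$, viewed as a colored trivalent graph, consists of a single edge colored with $1$ and carries no trivalent vertices. A decoration of $U$ therefore amounts to a single choice of point in $\mathbb{P}^{N-1}$, the datum attached to the unique $1$-colored edge, and there are no orthogonality-or-spanning relations to impose, since such relations arise only where three edges meet.

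Next I would identify the resulting space topologically. By construction $\M(\Gamma)$ sits inside the product, taken over the edges of $\Gamma$, of the projective spaces and Grassmannians associated to the colors, cut out by the vertex conditions. For $U$ this ambient product is a single copy of $\mathbb{P}^{N-1}$ and the cutting-out locus is all of it, so $\M(U) = \mathbb{P}^{N-1}$ with its standard topology, as claimed.

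There is essentially no obstacle here beyond the bookkeeping point that a loop with no endpoints imposes no constraint; I would state this cleanly, since it is the base case against which the later MOY moves are measured. As a consistency check one may note that $\chi(\mathbb{P}^{N-1}) = N = P_N(U)(1)$, matching Theorem~\ref{maintheorem}, and indeed the full Poincare polynomial $\pi(\mathbb{P}^{N-1}) = 1 + q^2 + \dots + q^{2(N-1)}$ equals $q^{-(N-1)} P_N(U)$, in line with Hypothesis~\ref{dreamthm} for this move.
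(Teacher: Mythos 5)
Your proof is correct and is exactly the argument the paper leaves implicit when it says ``the proof is immediate'': a circle has no trivalent vertices, so the admissibility condition is vacuous and the decoration is a free choice of a point of $\mathbb{P}^{N-1}$. The only quibble is in your final consistency check, where the power of $q$ is on the wrong side: one has $q^{1-N}\,\pi(\mathbb{P}^{N-1}) = P_N(U)$, i.e.\ $C = 1-N$ in the notation of Hypothesis~\ref{dreamthm}, not $\pi(\mathbb{P}^{N-1}) = q^{-(N-1)}P_N(U)$.
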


The proof is immediate.  Note this shows that $U$ satisfies Hypothesis \ref{dreamthm}.

\begin{figure}
\centerline{
{
\psfrag{thing1}{$= q^{1-n}$}
\psfrag{thing2}{$-q^{-n}$}
\psfrag{thing3}{$=q^{n-1}$}
\psfrag{thing4}{$-q^n$}
\psfrag{2}{$2$}
\psfrag{a}{$a$}
\psfrag{b}{$b$}
\psfrag{c}{$c$}
\includegraphics[height=1in,width=1.5in]{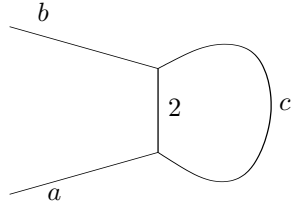}
}}
\caption{We call this subgraph $G$.  The three edges labelled $a,b,c$ are colored with $1$, we have indicated the edge colored with $2$.}
\label{MOY1}
\end{figure}

\subsection{MOY move I}
Suppose that $\Gamma$ is a colored trivalent graph containing the subgraph $G$ defined in Figure \ref{MOY1} .  The $1^{st}$ MOY move, MOY1, states that if $\Gamma'$ is the result of replacing $G$ in $\Gamma$ with a single $1$-colored edge then

\[ P_N(\Gamma) = \frac{q^{N-1} - q^{1-N}}{q - q^{-1}} P_N(\Gamma') {\rm .} \]

\begin{prop}
For $\Gamma$ and $\Gamma'$ as above we have that $\M(\Gamma)$ is a $\CP^{N-2}$-bundle over $\M(\Gamma')$.  
\end{prop}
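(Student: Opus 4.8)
The plan is to identify the fibers of the obvious forgetful map $\M(\Gamma) \to \M(\Gamma')$ explicitly and then recognize the total space as the pullback of a standard projective bundle over $\CP^{N-1}$. First I would set up coordinates for an admissible decoration of the subgraph $G$, which is a bigon: write $\ell_a, \ell_b, \ell_c \subset \C^N$ for the lines decorating the three $1$-colored edges and $\Pi \subset \C^N$ for the $2$-plane decorating the $2$-colored edge, where $b$ is the interior $1$-colored edge of the bigon and $a$, $c$ are the external $1$-colored edges. Reading off the admissibility condition at the two trivalent vertices gives, at the first vertex, $\ell_a \perp \ell_b$ with $\Pi = \ell_a \oplus \ell_b$, and at the second vertex, $\ell_c \perp \ell_b$ with $\Pi = \ell_c \oplus \ell_b$.

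The key observation is that these constraints force $\ell_a = \ell_c$. Indeed, both $\ell_a$ and $\ell_c$ lie in the $2$-plane $\Pi$ and are orthogonal to $\ell_b$; since the orthogonal complement of $\ell_b$ inside the $2$-dimensional space $\Pi$ is a single line, we must have $\ell_a = \ell_c = \Pi \cap \ell_b^\perp$, and then $\Pi$ is recovered as $\ell_a \oplus \ell_b$. Hence an admissible decoration of $G$ is equivalent to the data of a line $\ell_a\,(=\ell_c)$ together with a line $\ell_b$ in the orthogonal complement $\ell_a^\perp \cong \C^{N-1}$, that is, a point of $\mathbb{P}(\ell_a^\perp) \cong \CP^{N-2}$.

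With this in hand I would define the forgetful map $\M(\Gamma) \to \M(\Gamma')$ by discarding $\ell_b$ and $\Pi$ and assigning the common line $\ell_a = \ell_c$ to the single $1$-colored edge that replaces $G$ in $\Gamma'$. Since the external edges $a$ and $c$ attach to the rest of $\Gamma$ exactly where the single new edge attaches in $\Gamma'$, and since $\ell_a = \ell_c$, the constraints coming from the complement of $G$ match up verbatim; this shows the map is well-defined, and it is surjective because any $\ell_a$ admits an extension (choose any $\ell_b \in \mathbb{P}(\ell_a^\perp)$ and set $\Pi = \ell_a \oplus \ell_b$). The fiber over a decoration is precisely the space $\mathbb{P}(\ell_a^\perp)$ of admissible choices of $\ell_b$.

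Finally I would upgrade this fiberwise description to a genuine fiber-bundle statement. The assignment $\ell \mapsto \mathbb{P}(\ell^\perp)$ is the projectivization of the rank-$(N-1)$ orthogonal complement of the tautological line bundle over $\CP^{N-1}$, which is a locally trivial $\CP^{N-2}$-bundle; our map $\M(\Gamma) \to \M(\Gamma')$ is its pullback along the evaluation map $\M(\Gamma') \to \CP^{N-1}$ recording the line on the replacement edge, and is therefore itself a locally trivial $\CP^{N-2}$-bundle. The main thing to get right is the verification that $\ell_a = \ell_c$, since this is exactly what makes the forgetful map land in $\M(\Gamma')$ and simultaneously pins down the fiber; once that identity is established, the bundle structure is essentially formal.
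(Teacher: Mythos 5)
Your proof is correct and follows essentially the same route as the paper's: the orthogonality and spanning conditions at the two vertices of the bigon force the two external $1$-colored decorations to coincide, and the interior line is then free in the orthogonal complement of that common line, giving a $\CP^{N-2}$ fiber (your edge labels differ from the figure's, but the content is identical). Your extra step identifying $\M(\Gamma)\to\M(\Gamma')$ as the pullback of the projectivized orthogonal complement of the tautological bundle over $\CP^{N-1}$ is a welcome justification of local triviality that the paper leaves implicit.
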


\begin{proof}
Observe that any admissible choice of decoration of the graph $\Gamma$ when restricted to the subgraph $G$ must have the same decoration $b$ as $a$.  Also, for a choice of decoration $a$, $c$ can be any line in the $(N-1)$-plane orthogonal to $a$.
\end{proof}

\noindent Then immediately we see

\begin{corollary}
\label{P2}
For $\Gamma$ and $\Gamma'$ as above we have that

\[ \chi(\M(\Gamma)) = \chi(\CP^{N-2})\chi(\M(\Gamma')) = \left. \frac{q^{N-1} - q^{1-N}}{q - q^{-1}}\right\vert_{q=1} \chi(\M(\Gamma')) \rm{.} \] \qed
\end{corollary}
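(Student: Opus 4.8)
The plan is to read this off directly from the fiber-bundle structure established in the preceding proposition, combined with the multiplicativity of the Euler characteristic under fibrations and a one-line evaluation of the quantum integer at unity. First I would recall that we have just exhibited $\M(\Gamma)$ as a $\CP^{N-2}$-bundle over $\M(\Gamma')$, and note that both $\M(\Gamma)$ and $\M(\Gamma')$ are compact real algebraic varieties, being cut out by orthogonality and span conditions inside finite products of projective spaces and Grassmannians. Hence all the relevant homology is finitely generated and of bounded degree, so each Euler characteristic appearing is well defined and we are free to compute with the Serre spectral sequence.

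For the first equality I would invoke multiplicativity of $\chi$ in the fibration $\CP^{N-2} \hookrightarrow \M(\Gamma) \to \M(\Gamma')$. The Euler characteristic of the total space equals the alternating sum of the dimensions of the $E_2$-page, namely $\sum_{p,q} (-1)^{p+q} \dim H^p(\M(\Gamma'); \mathcal{H}^q)$, where $\mathcal{H}^q$ denotes the local system $q \mapsto H^q(\CP^{N-2})$. Since the Euler characteristic of the base with coefficients in a local system depends only on the rank of that system and not on its monodromy, each inner sum contributes $\chi(\M(\Gamma')) \cdot \dim H^q(\CP^{N-2})$, and summing over $q$ yields $\chi(\M(\Gamma')) \cdot \chi(\CP^{N-2})$. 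In the present case $\CP^{N-2}$ has cohomology concentrated in even degrees, so the monodromy acts trivially and one could instead simply appeal to Leray--Hirsch.

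The second equality is then a computation. Writing the fraction as the quantum integer
\[ \frac{q^{N-1} - q^{1-N}}{q - q^{-1}} = q^{N-2} + q^{N-4} + \cdots + q^{2-N}, \]
a sum of $N-1$ monomials, its value at $q = 1$ is $N-1$; on the other hand $\chi(\CP^{N-2}) = N-1$, so the two expressions agree. The only point that is not purely formal is the multiplicativity of the Euler characteristic, and even this is classical; the mild care required is to confirm that the bundle's monodromy does not obstruct the spectral-sequence argument, which it cannot here since the fiber cohomology lives in even degrees alone.
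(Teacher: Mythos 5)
Your proof is correct and takes essentially the same route as the paper, which simply declares the corollary immediate from the $\CP^{N-2}$-bundle structure together with multiplicativity of the Euler characteristic and the evaluation of the quantum integer $[N-1]_q$ at $q=1$. The extra care you take with the local system in the Serre spectral sequence is sound but not needed beyond what the paper implicitly assumes.
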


Observe further that with the assumption that $\M(\Gamma')$ has only even-dimensional homology we have

\[ \pi(\M(\Gamma)) = \pi(\CP^{n-2}) \pi(\M(\Gamma')) \]

\noindent by the triviality of the differentials in the Serre spectral sequence.  Note that this is in the direction of Hypothesis \ref{dreamthm}.

\subsection{MOY move II}
Suppose that $\Gamma$ is a colored trivalent graph containing as a subgraph $G$ as defined in Figure \ref{MOY2}.  Then the move MOY2 states that if $\Gamma'$ is the result of replacing $G$ in $\Gamma$ by a single edge colored by $2$ then

\[ P_N(\Gamma) = (q + q^{-1}) P_N(\Gamma') {\rm .} \]

\begin{figure}
\centerline{
{
\psfrag{thing1}{$= q^{1-n}$}
\psfrag{thing2}{$-q^{-n}$}
\psfrag{thing3}{$=q^{n-1}$}
\psfrag{thing4}{$-q^n$}
\psfrag{2}{$2$}
\psfrag{a}{$a$}
\psfrag{b}{$b$}
\psfrag{c}{$c$}
\includegraphics[height=2in,width=1in]{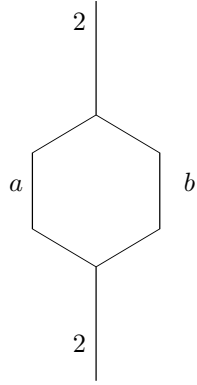}
}}
\caption{We call this subgraph $G$.  The edges labelled $a,b$ are colored with $1$, the other edges are colored with $2$ as indicated.}
\label{MOY2}
\end{figure}

\begin{prop}
For $\Gamma$ and $\Gamma'$ as above we have that $\M(\Gamma)$ is a $\CP^1$-bundle over $\M(\Gamma')$.
\end{prop}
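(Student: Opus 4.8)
The plan is to exhibit $\M(\Gamma)$ as the projectivization of a rank-$2$ vector bundle over $\M(\Gamma')$, since the projectivization of a rank-$2$ bundle is automatically a $\CP^1$-bundle.

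First I would unwind the admissibility constraints coming from the subgraph $G$. Write $\ell_a, \ell_b \in \CP^{N-1}$ for the decorations of the two $1$-colored edges and $P_{\mathrm{top}}, P_{\mathrm{bot}} \in \mathbb{G}(2,N)$ for the decorations of the two $2$-colored edges of the digon. The admissibility condition at each of the two trivalent vertices of $G$ says that $\ell_a \perp \ell_b$ and that the adjacent $2$-plane is their span. Since both vertices involve the \emph{same} pair $\ell_a, \ell_b$, this forces $P_{\mathrm{top}} = \ell_a \oplus \ell_b = P_{\mathrm{bot}}$; in particular the two $2$-colored edges carry one and the same $2$-plane $P$. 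This $P$ is precisely the decoration of the single $2$-colored edge that replaces $G$ in $\Gamma'$, so discarding $\ell_a, \ell_b$ and recording $P$ together with the (unchanged) decorations of the rest of the graph gives a well-defined forgetful map $\M(\Gamma) \to \M(\Gamma')$.

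Next I would describe the fibers. A point of $\M(\Gamma')$ fixes $P$ and all the decorations outside $G$; crucially these outside decorations constrain the digon only through $P$, so the fiber over such a point is exactly the set of admissible fillings of the digon with $P$ held fixed. Such a filling is an ordered pair of orthogonal lines $\ell_a, \ell_b$ spanning $P$; but $\ell_a$ may be chosen to be any line in $P$, and then $\ell_b$ is forced to equal its orthogonal complement inside $P$. Hence the fiber is canonically $\CP(P) \cong \CP^1$. To promote this pointwise description to a fiber bundle I would let $\tau \to \mathbb{G}(2,N)$ denote the tautological rank-$2$ bundle, pull it back along the evaluation map $\M(\Gamma') \to \mathbb{G}(2,N)$ that reads off $P$, and identify $\M(\Gamma)$ with the projectivization of this pullback; local triviality then follows at once from the local triviality of $\tau$.

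The only point needing care is this last identification: I must check that passing from $\ell_a$ to $\ell_b = \ell_a^{\perp} \cap P$ varies continuously as $P$ varies, so that the natural map from the projectivized pullback of $\tau$ to $\M(\Gamma)$ is a genuine homeomorphism rather than merely a continuous bijection. This is where I expect the main (though mild) obstacle to lie. It is dispatched by observing that orthogonal projection onto $P$ depends continuously on $P \in \mathbb{G}(2,N)$, so that the assignment $(\ell_a, P) \mapsto (\ell_a, \ell_a^{\perp} \cap P)$ and its inverse are both continuous.
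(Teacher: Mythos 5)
Your proposal is correct and follows essentially the same route as the paper: both arguments observe that admissibility forces the two $2$-colored edges of the digon to carry the same plane $P$, that $a$ may be any line in $P$, and that $b$ is then the unique orthogonal line to $a$ in $P$. The only difference is that you additionally spell out local triviality via the projectivized pullback of the tautological bundle, a detail the paper leaves implicit.
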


\begin{proof}
First observe that any admissible decoration of $\Gamma$ must decorate each of the the 2-colored edges of $G$ with the same point of $\mathbb{G}(2,N)$.  For such a choice, the decoration $a$ can be any line in the 2-plane decorating the 2-colored edges, and then $b$ must be the unique othogonal line to $a$ in this plane.
\end{proof}

\noindent Then immediately we have

\begin{corollary}
\label{P3}
For $\Gamma$ and $\Gamma'$ as above

\[ \chi(\M(\Gamma)) = \chi(\CP^1)\chi(\M(\Gamma')) = (q + q^{-1})\vert_{q=1} \chi(\M(\Gamma'))  {\rm .} \] \qed
\end{corollary}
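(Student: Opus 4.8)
The plan is to read this off directly from the preceding Proposition, which presents $\M(\Gamma)$ as a $\CP^1$-bundle over $\M(\Gamma')$, combined with the multiplicativity of the Euler characteristic over fibrations. So the first step is simply to invoke that Proposition, giving a locally trivial fiber bundle with fiber $\CP^1$ and base $\M(\Gamma')$, whose total space is $\M(\Gamma)$.

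Next I would apply the standard multiplicativity $\chi(E) = \chi(F)\,\chi(B)$ for a fiber bundle $F \to E \to B$ over a connected base with finitely generated homology, which yields $\chi(\M(\Gamma)) = \chi(\CP^1)\,\chi(\M(\Gamma'))$. It then remains only to match the scalar factors: one computes $\chi(\CP^1) = 2$, while on the other side $(q + q^{-1})\vert_{q=1} = 2$, so the two expressions for the coefficient coincide and the full chain of equalities in the statement follows.

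The only point needing any care — and it is a minor one — is the justification of the multiplicativity of $\chi$. I would supply this through the Serre spectral sequence of the bundle: it converges to $H_*(\M(\Gamma))$, and the Euler characteristic is unchanged from each page to the next, so $\chi(\M(\Gamma))$ equals the alternating sum of the dimensions of $E_2^{p,q} = H_p(\M(\Gamma'); \mathcal{H}_q(\CP^1))$, the homology of the base with coefficients in the (possibly twisted) homology of the fiber. The one apparent subtlety is the monodromy action on $H_*(\CP^1)$, but this is harmless: the Euler characteristic of a space with coefficients in a local system depends only on the rank of that system, so the alternating sum factors as $\chi(\CP^1)\,\chi(\M(\Gamma'))$ irrespective of the monodromy. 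This is precisely the mechanism already used in Corollary \ref{P2} and in the proof of Proposition \ref{countereg}, so no new ideas are required and the result is immediate.
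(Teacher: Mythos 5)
Your proposal is correct and matches the paper's (implicit) argument exactly: the paper deduces the corollary "immediately" from the preceding proposition exhibiting $\M(\Gamma)$ as a $\CP^1$-bundle over $\M(\Gamma')$, via multiplicativity of the Euler characteristic for fiber bundles and the evaluation $\chi(\CP^1)=2=(q+q^{-1})\vert_{q=1}$. Your extra care about monodromy in the Serre spectral sequence is a sound (and slightly more detailed) justification of the same step.
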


Observe further that with the assumption that $\M(\Gamma')$ has only even dimensional homology we have that 

\[ \pi(\M(\Gamma)) = \pi(\CP^1)\pi(\M(\Gamma')) \]

\noindent by the Serre spectral sequence.  Again, this is in direction of Hypothesis \ref{dreamthm}.

\subsection{MOY move III}
Suppose that $\Gamma$ is a colored trivalent graph containing as a subgraph $G$ as defined in Figure \ref{MOY2opp}.  Then the move MOY3 states that if $\Gamma_i$ is the result of replacing $G$ in $\Gamma$ by the subgraphs $G_i$ shown in Figure \ref{MOY2opp} for $i = 1,2$ then

\[ P_N(\Gamma) = \frac{q^{N-2} - q^{2-N}}{q + q^{-1}} P_N(\Gamma_1) + P_N(\Gamma_2) {\rm .} \]

\noindent Note that

\[  \left. \frac{q^{N-2} - q^{2-N}}{q + q^{-1}} \right|_{q=1} = N-2 {\rm .} \]

\begin{figure}
\centerline{
{
\psfrag{thing1}{$= q^{1-n}$}
\psfrag{thing2}{$-q^{-n}$}
\psfrag{thing3}{$=q^{n-1}$}
\psfrag{thing4}{$-q^n$}
\psfrag{a}{$a$}
\psfrag{b}{$b$}
\psfrag{c}{$c$}
\psfrag{d}{$d$}
\psfrag{e}{$e$}
\psfrag{f}{$f$}
\psfrag{2}{$2$}
\psfrag{G}{$G$}
\psfrag{G1}{$G_1$}
\psfrag{G2}{$G_2$}
\includegraphics[height=1.4in,width=4.4in]{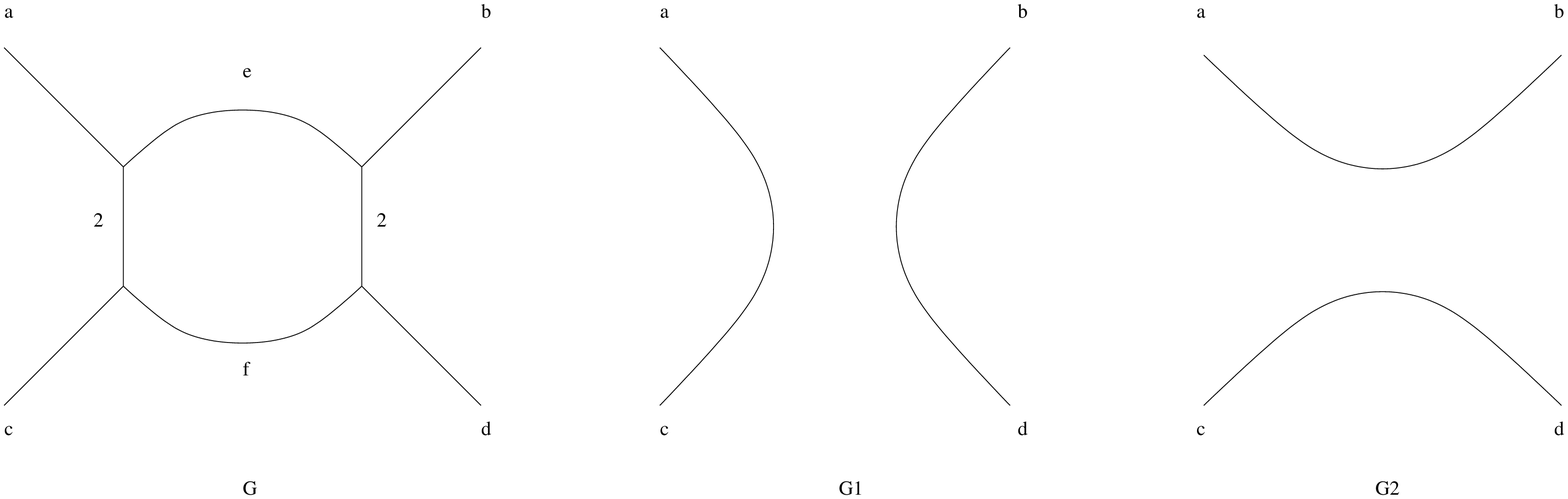}
}}
\caption{We show three subgraphs $G$, $G_1$, $G_2$ of colored trivalent graphs.  Each edge decorated with a lower case English letter is 1-colored, those shown with $2$ are 2-colored.}
\label{MOY2opp}
\end{figure}

\begin{prop}
\label{hisonalwhatsup}
For $\Gamma$, $\Gamma_1$, $\Gamma_2$ as described above we have

\[ \chi(\M(\Gamma)) =  (N-2) \chi(\M(\Gamma_1)) + \chi(\M(\Gamma_2)) {\rm .}\]
\end{prop}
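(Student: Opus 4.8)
The plan is to give up on the clean fiber-bundle picture that settled MOY0, MOY1, and MOY2 --- indeed Proposition \ref{countereg} and the discussion following it show no such global bundle can exist here --- and instead to compute $\chi(\M(\Gamma))$ by cutting $\M(\Gamma)$ into two constructible strata and adding up their Euler characteristics. First I would carry out, once and for all, the local case analysis of the admissible decorations of the subgraph $G$ of Figure \ref{MOY2opp}, in exactly the hands-on style of the proof of Proposition \ref{countereg}. The natural dividing line is whether the two internal $1$-colored edges of $G$ are assigned the \emph{same} line of $\C^N$ or two distinct lines; this determines a closed locus $Z \subset \M(\Gamma)$ and its open complement $U = \M(\Gamma) \setminus Z$, and the theorem will follow from the additivity relation
\[ \chi(\M(\Gamma)) = \chi(U) + \chi(Z) {\rm .} \]

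On the locus $Z$ where the two internal $1$-colored edges carry a common line $\ell$, the orthogonality-and-span conditions at the four trivalent vertices of $G$ force the external $1$-colored data to match in pairs, so that the local decoration factors through the resolution $G_1$; the only remaining freedom is the choice of $\ell$, which must be orthogonal to the $2$-plane spanned by the two distinct external lines, i.e. a line in an $(N-2)$-dimensional subspace of $\C^N$. This exhibits $Z$ as a $\CP^{N-3}$-bundle over $\M(\Gamma_1)$, and since $\chi(\CP^{N-3}) = N-2$, multiplicativity of the Euler characteristic over this bundle gives $\chi(Z) = (N-2)\,\chi(\M(\Gamma_1))$. On the open locus $U$ the two internal lines are distinct, which forces the two internal $2$-colored planes to coincide and pins down the entire internal decoration rigidly from the external data; identifying this with the decorations coming from $G_2$ should yield $\chi(U) = \chi(\M(\Gamma_2))$, so that the two contributions assemble into the stated formula.

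Two points demand genuine care. The first is that $\M(\Gamma)$, $\M(\Gamma_1)$, and $\M(\Gamma_2)$ are cut out of products of projective spaces and Grassmannians by Hermitian orthogonality conditions, so they are \emph{real} rather than complex algebraic varieties, and additivity of the ordinary Euler characteristic over the splitting $\M(\Gamma) = U \sqcup Z$ is not automatic. The clean remedy is to work throughout with the compactly supported Euler characteristic $\chi_c$, which is genuinely additive over an open/closed decomposition and multiplicative over locally trivial fibrations, and to observe that $\chi_c$ agrees with $\chi$ on the compact spaces in sight and on the compact fiber $\CP^{N-3}$. The second, and I expect the harder, point is the geometric bookkeeping: one must check that $Z$ fibers over \emph{all} of $\M(\Gamma_1)$ with exactly the claimed fiber, that the rigid completions over $U$ account \emph{exactly} for $\chi(\M(\Gamma_2))$ (watching the overlap locus where an external pair coincides, just as the punctured $\CP^1$'s appeared in Proposition \ref{countereg}), and that all of these local identifications are compatible with the decorations on the part of $\Gamma$ lying outside $G$.
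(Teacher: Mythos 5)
Your overall strategy --- stratify $\M(\Gamma)$ by the behaviour of the internal decoration of $G$ and add up compactly supported Euler characteristics --- is sound, and the $\chi_c$ device is arguably a cleaner substitute for the paper's combination of Hardt triviality and Mayer--Vietoris. But the concrete description of your two strata is wrong at exactly the delicate point, and both of your intermediate identities are false as stated. The closed locus $Z$ (internal lines equal) is \emph{not} a $\CP^{N-3}$-bundle over $\M(\Gamma_1)$: over the sublocus of $\M(\Gamma_1)$ where all four external lines coincide ($a=b=c=d$), your phrase ``the $2$-plane spanned by the two distinct external lines'' has no referent, and the fiber of internal choices jumps from $\CP^{N-3}$ to $\CP^{N-2}$ (all lines orthogonal to the single line $a$). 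Dually, when $a=b=c=d$ every admissible internal decoration has the two internal lines equal, so your open stratum $U$ is homeomorphic to $\M(\Gamma_2)$ \emph{minus} its diagonal locus, not to all of $\M(\Gamma_2)$. Writing $V_1=V_2$ for that diagonal locus, the true counts are $\chi_c(Z)=(N-2)\chi(\M(\Gamma_1))+\chi(V_1)$ and $\chi_c(U)=\chi(\M(\Gamma_2))-\chi(V_2)$; your two errors cancel, but nothing in your write-up notices or justifies this cancellation --- indeed you explicitly defer ``checking that $Z$ fibers over all of $\M(\Gamma_1)$ with exactly the claimed fiber,'' and that check would fail.

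The missing idea is precisely the one the paper's proof is organized around: isolate the deep stratum $V$ lying over $a=b=c=d$, observe that there the internal choices form a $\CP^{N-2}$, and split its contribution using $\chi(\CP^{N-2})=N-1=(N-2)+1$, assigning $(N-2)\chi(V_1)$ to the $\Gamma_1$ term and $\chi(V_2)$ to the $\Gamma_2$ term. (The paper implements this by excising a neighbourhood $\widetilde V$ of $V$, noting $V$ is a $\CP^{N-2}$-bundle over $V_1=V_2$, and recombining with Mayer--Vietoris; in your language one would simply refine your stratification into three pieces --- $U$, $Z\setminus V$, and $V$ --- and apply $\chi_c$-additivity to all three.) Without this step the proof does not go through, so you should either carry out the three-piece bookkeeping explicitly or correct the two fiber-bundle claims before summing.
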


\begin{proof}
Consider the subgraph $G$ of $\Gamma$ as shown in Figure \ref{MOY2opp}.  It is easy to see that for an admissible decoration of $\Gamma$ we must have either ($a=b$ and $c=d$) or ($a=c$ and $b=d$).  In the case when $a=c\not=b=d$, there is a $\CP^{N-3}$ of choices of decoration for the interior edges of $G$ (this $\CP^{N-3}$ corresponds to the projectivization of the $(N-2)$-plane perpendicular to the $2$-plane spanned by the lines $a$ and $b$).  In the case when $a=b\not=c=d$, there is a unique choice of decoration for the interior edges of $G$.  And finally, in the case when $a=b=c=d$, there is a $\CP^{N-2}$ of choices for the interior edges of $G$ (this $\CP^{N-2}$ being the projectivization of the othogonal complement to the line $a$).

Clearly for admissible decorations of $\Gamma_1$ we must have $a=c$ and $b=d$, and for admissible decorations of $\Gamma_2$ we must have $a=b$ and $c=d$.

Note that our moduli spaces $\M(\Gamma)$, $\M(\Gamma_1)$, $\M(\Gamma_2)$ naturally have the structure of compact real varieties embedded in a product of complex projective spaces (the orthogonality condition at a trivalent vertex is a real polynomial condition not a complex condition).

By evaluation at $a,b,c,d$ we get algebraic maps to $(\CP^{N-1})^4$ from each of $\M(\Gamma)$, $\M(\Gamma_1)$, $\M(\Gamma_2)$.  We write $V$, $V_1$, $V_2$ for the preimages of the subvariety $\Delta$ of $(\CP^{N-1})^4$ given by $a=b=c=d$.

Since we are dealing with an algebraic map of real varieties, for a small enough open set $U \subset (\CP^{N-1})^4$ containing $\Delta$ we have that the respective preimages $\widetilde{V}$, $\widetilde{V_1}$, $\widetilde{V_2}$ of $U$ are each homotopy equivalent to $V$, $V_1$, $V_2$ respectively.  This is a standard argument using Hardt triviality.

Note also that $V_1 = V_2$ naturally (although we will retain both indices for notational convenience) and $V$ is a $\CP^{N-2}$-bundle over $V_1 = V_2$ so that

\[ \chi(\widetilde{V}) =  \chi(V) = (N-1) \chi(V_1) = (N-2) \chi(V_1) + \chi(V_2) = (N-2) \chi(\widetilde{V_1}) + \chi(\widetilde{V_2}) {\rm .} \]

\noindent Using the Mayer-Vietoris sequence we see that

\begin{eqnarray*}
\chi(\M(\Gamma)) &=& \chi(\M(\Gamma) \setminus V) + \chi(\tilde{V}) - \chi((\M(\Gamma) \setminus V)\cap \widetilde{V}) \\
&=& \chi(\M(\Gamma) \setminus V) + (N-2) \chi(\widetilde{V_1}) + \chi(\widetilde{V_2})  -  \chi(\widetilde{V} \setminus V) {\rm .}
\end{eqnarray*}

Now note that $\M(\Gamma) \setminus V$ is the disjoint union of $\M(\Gamma_2) \setminus V_2$ and a $\CP^{N-3}$-bundle over $\M(\Gamma_1) \setminus V_1$, and $\widetilde{V} \setminus V$ is the disjoint union of $\widetilde{V_2} \setminus V_2$ and a $\CP^{N-3}$-bundle over $\widetilde{V_1} \setminus V_1$.  Hence we have

\begin{eqnarray*}
\chi(\M(\Gamma)) &=& \chi(\M(\Gamma_2) \setminus V_2) + \chi(\widetilde{V_2}) - \chi(\widetilde{V_2} \setminus V_2) \\
&+& (N-2)\chi(\M(\Gamma_1) \setminus V_1)  + (N-2) \chi(\widetilde{V_1}) - (N-2)\chi(\widetilde{V_1} \setminus V_1) \\
&=& (N-2)\chi(\M(\Gamma_1)) + \chi(\M(\Gamma_2)) {\rm .}
\end{eqnarray*}
\end{proof}

\subsection{MOY move IV}
\label{MOYIVsect}

\begin{figure}
\centerline{
{
\psfrag{thing1}{$= q^{1-n}$}
\psfrag{thing2}{$-q^{-n}$}
\psfrag{thing3}{$=q^{n-1}$}
\psfrag{thing4}{$-q^n$}
\psfrag{2}{$2$}
\psfrag{a}{$a$}
\psfrag{b}{$b$}
\psfrag{c}{$c$}
\psfrag{d}{$d$}
\psfrag{e}{$e$}
\psfrag{P}{$P$}
\psfrag{Q}{$Q$}
\psfrag{R}{$R$}
\psfrag{Phi}{$\Phi$}
\psfrag{Gamma}{$G$}
\psfrag{Gamma1}{$G_1$}
\psfrag{Gamma2}{$G_2$}
\includegraphics[height=2in,width=3.5in]{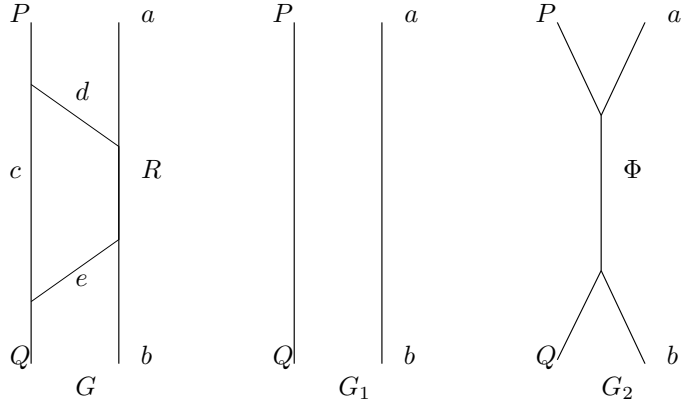}
}}
\caption{We show three subgraphs $G$, $G_1$, $G_2$ of colored trivalent graphs.  Each edge decorated with a lower case English letter is 1-colored, those decorated with upper case English letters are 2-colored, and that decorated with $\Phi$ is 3-colored.}
\label{MOY3}
\end{figure}

\begin{figure}
\centerline{
{
\psfrag{thing1}{$= q^{1-n}$}
\psfrag{thing2}{$-q^{-n}$}
\psfrag{thing3}{$=q^{n-1}$}
\psfrag{thing4}{$-q^n$}
\psfrag{2}{$2$}
\psfrag{a}{$1$}
\psfrag{b}{$1$}
\psfrag{c}{$1$}
\psfrag{d}{$d$}
\psfrag{e}{$e$}
\psfrag{P}{$2$}
\psfrag{Q}{$2$}
\psfrag{R}{$R$}
\psfrag{Phi}{$3$}
\psfrag{Gamma}{$\Gamma$}
\psfrag{Gamma1}{$\Gamma_1$}
\psfrag{Gamma2}{$\Gamma_2$}
\includegraphics[height=2in,width=2in]{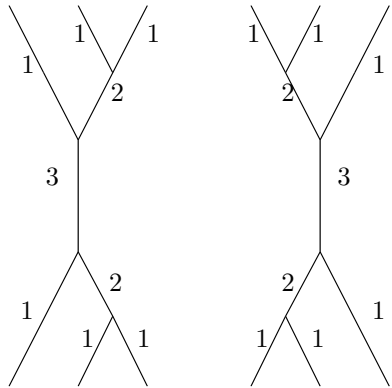}
}}
\caption{If $\Gamma$ is some colored trivalent graph with the left-hand picture appearing as a subgraph and $\Gamma'$ is the result of replacing the subgraph with the right-hand picture, then clearly $\M(\Gamma) = \M(\Gamma')$.}
\label{MOYtrick}
\end{figure}

We refer to the discussion after Definition \ref{admissdef} for the definition of the moduli space associated to a trivalent graph colored with colors from the set $\{1,2,3\}$.

Suppose that $\Gamma$ is a colored trivalent graph with a subgraph $G$ as indicated in Figure \ref{MOY3}.  Form $\Gamma_1$ and $\Gamma_2$ from $\Gamma$ by replacing $G$ with $G_1$ or $G_2$ respectively.  Then the move MOY4 states that

\[ P_N(\Gamma) = P_N(\Gamma_1) + P_N(\Gamma_2) {\rm .} \]

\begin{proposition}
\label{corblimey}
For such graphs $\Gamma$, $\Gamma_1$, $\Gamma_2$ as above we have

\[ \chi(\M(\Gamma))= \chi(\M(\Gamma_1)) + \chi(\M(\Gamma_2)) {\rm .} \]
\end{proposition}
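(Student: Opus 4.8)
The plan is to follow the template of the proof of MOY move III (Proposition \ref{hisonalwhatsup}), since MOY4 is again a purely additive relation. The first step is to enumerate the admissible decorations of the subgraph $G$ of Figure \ref{MOY3}. The $3$-colored edge $\Phi$ is decorated by a point of $\mathbb{G}(3,N)$, and the vertex conditions force the incident $1$- and $2$-colored decorations to sit as orthogonal subspaces spanning this $3$-plane and to match the boundary edges of $G$. I expect these decorations to split into two types according to a single coincidence (or orthogonality) condition among the decorations: one type reproducing the admissible decorations of $G_1$, the other reproducing those of $G_2$. The homeomorphisms $\M(\cdot)=\M(\cdot)$ furnished by the local replacement of Figure \ref{MOYtrick} can be used to eliminate the $3$-colored edge and to identify each stratum with a configuration of the expected $\Gamma_i$-type, keeping the combinatorics manageable.

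Next, exactly as in MOY III, I would record that $\M(\Gamma)$, $\M(\Gamma_1)$, and $\M(\Gamma_2)$ are compact real algebraic varieties inside a product of projective spaces and Grassmannians, the orthogonality relations being real polynomial conditions. I would take evaluation maps recording the decorations of the boundary edges of $G$, let $\Delta$ be the closed subvariety cut out by the degeneracy condition separating the two strata, and write $V$, $V_1$, $V_2$ for the preimages of $\Delta$ in the three moduli spaces. By Hardt triviality a sufficiently small open $U \supset \Delta$ has preimages $\widetilde{V}$, $\widetilde{V_1}$, $\widetilde{V_2}$ each homotopy equivalent to $V$, $V_1$, $V_2$. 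The Euler characteristic is then computed by Mayer-Vietoris as in MOY III: I would split $\chi(\M(\Gamma)) = \chi(\M(\Gamma)\setminus V) + \chi(\widetilde{V}) - \chi((\M(\Gamma)\setminus V)\cap\widetilde{V})$, identify $\M(\Gamma)\setminus V$ and $\widetilde{V}\setminus V$ as disjoint unions of pieces fibering over the corresponding pieces of $\M(\Gamma_1)$ and $\M(\Gamma_2)$, and check that the neighborhood terms telescope. The crucial structural difference from MOY III is that here both MOY4 coefficients equal $1$, so I expect the fibers over each stratum to be points (or at least to have Euler characteristic $1$), so that the open-stratum contributions add directly to $\chi(\M(\Gamma_1)\setminus V_1) + \chi(\M(\Gamma_2)\setminus V_2)$ while the core contributions supply exactly $\chi(V_1)+\chi(V_2)=\chi(V)$.

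The hard part will be the decoration analysis near the degenerate locus $\Delta$: pinning down precisely which coincidence of subspaces forces the jump between the two strata, verifying that away from $\Delta$ the moduli space is genuinely the disjoint union of a $\Gamma_1$-shaped piece and a $\Gamma_2$-shaped piece (rather than a nontrivial bundle contributing an extra factor, as happened in MOY III), and confirming that the degenerate locus itself decomposes compatibly so that $\chi(V)=\chi(V_1)+\chi(V_2)$. Once this stratification and the triviality of the relevant fibers are established, the Mayer-Vietoris bookkeeping telescopes formally, by the same identity $\chi(\M(\Gamma_i)) = \chi(\M(\Gamma_i)\setminus V_i) + \chi(\widetilde{V_i}) - \chi(\widetilde{V_i}\setminus V_i)$ applied to each $i$, to give $\chi(\M(\Gamma)) = \chi(\M(\Gamma_1)) + \chi(\M(\Gamma_2))$, just as in the proof of Proposition \ref{hisonalwhatsup}.
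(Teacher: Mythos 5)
Your plan is essentially the paper's proof: the paper evaluates at $a,b,P,Q$ to map into $(\CP^{N-1})^2\times\mathbb{G}(2,N)^2$, takes $\Delta$ to be the locus where $P=Q$, $a=b$, and $a\perp P$, and then runs exactly the Hardt-triviality-plus-Mayer--Vietoris argument you outline, with $\M(\Gamma)\setminus V$ the disjoint union of $\M(\Gamma_1)\setminus V_1$ and $\M(\Gamma_2)\setminus V_2$ (unique interior decorations over each open stratum, as you predicted). The one point you explicitly leave open is settled by noting that over $\Delta$ the interior edges of $G$ admit a $\CP^1$ of decorations, so $V$ is a $\CP^1$-bundle over $V_1=V_2$ and $\chi(V)=2\chi(V_1)=\chi(V_1)+\chi(V_2)$, exactly the identity you needed.
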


At first sight, this proposition does not seem to give a relationship between trivalent graphs colored only drawing from the palette $\{ 1 , 2 \}$.  However, using the relationship between moduli spaces shown in Figure \ref{MOYtrick} we get such a result.  Murakami, Ohtsuki, and Yamada use an analogous trick in \cite{MOY} to get a relationship between the polynomials $P_N$ of $\{ 1 , 2 \}$-colored graphs.

We note that this proposition is less strong than would be demanded by a proof of Hypothesis \ref{dreamthm}, but the example of Proposition \ref{countereg} shows that Proposition \ref{corblimey} does not admit a lift to the Poincare polynomial in general.

\begin{proof}

By evaluation at the endpoints $a,b,P,Q$ we get algebraic maps from each of $\M(\Gamma)$, $\M(\Gamma_1)$, and $\M(\Gamma_2)$ to $(\CP^{N-1})^2 \times \mathbb{G}(2,N)^2$.  Let $V$, $V_1$, $V_2$ be the respective preimages of the subvariety $\Delta$ of $(\CP^{N-1})^2 \times \mathbb{G}(2,N)^2$ carved out by the three requirements $P=Q$, $a=b$, and $a$ is perpendicular to $P$.

Just as in the proof of \ref{hisonalwhatsup}, for a small enough open set $U \subset (\CP^{N-1})^2 \times \mathbb{G}(2,N)^2$ containing $\Delta$ we have that the respective preimages $\widetilde{V}$, $\widetilde{V_1}$, $\widetilde{V_2}$ of $U$ are each homotopy equivalent to $V$, $V_1$, $V_2$ respectively.

Note that naturally $V_1 = V_2$ but we will retain both indices for notational convenience.

Consider the subgraph $G$ of $\Gamma$.  For $a$ not perpendicular to $P$, one can check that there is a unique way in which to choose decorations for the other edges of $G$.  It follows also in this case that $P=Q$ and $b = a$.

For $a$ perpendicular to $P$, it follows that $Q$ is a plane in the 3-space spanned by $a$ and $P$ and $b$ is the unique perpendicular line to $Q$ in this 3-space.  In this case, when $Q \not= P$ there is a unique choice of decorations for the other edges of $G$, and when $Q = P$ there is a a $\CP^1$ of choices of decorations for the other edges.

Note that this implies that $V$ has the structure of a $\CP^1$-bundle over $V_1 = V_2$ so that

\[ \chi(\widetilde{V}) = \chi(V) = 2\chi(V_1) = \chi(V_1) + \chi(V_2) = \chi(\widetilde{V_1}) + \chi(\widetilde{V_2}) {\rm .} \]

\noindent Hence, using the Mayer-Vietoris sequence, we have

\begin{eqnarray*}
\chi(\M(\Gamma)) &=& \chi(\M(\Gamma) \setminus V) + \chi(\widetilde{V}) - \chi((\M(\Gamma) \setminus V)\cap \widetilde{V}) \\
&=& \chi(\M(\Gamma) \setminus V) + \chi(\widetilde{V_1}) + \chi(\widetilde{V_2}) -  \chi(\widetilde{V} \setminus V) {\rm .}
\end{eqnarray*}

Finally note that $\M(\Gamma) \setminus V$ is the disjoint union of $\M(\Gamma_1) \setminus V_1$ and $\M(\Gamma_2) \setminus V_2$, and also $\widetilde{V} \setminus V$ is the disjoint union of $\widetilde{V_1} \setminus V_1$ and $\widetilde{V_2} \setminus V_2$.  Thus we have

\begin{eqnarray*}
\chi(\M(\Gamma)) &=& \chi(\M(\Gamma_1) \setminus V_1)  + \chi(\widetilde{V_1})   -  \chi(\widetilde{V_1} \setminus V_1) \\
&+& \chi(\M(\Gamma_2) \setminus V_2) + \chi(\widetilde{V_2})  -  \chi(\widetilde{V_2} \setminus V_2) \\
&=& \chi(\M(\Gamma_1)) + \chi(\M(\Gamma_2)) {\rm .}
\end{eqnarray*}
\end{proof}

\subsection{Proof of main theorem}

\begin{proof}[Proof of Theorem \ref{maintheorem}]
Since for any colored trivalent graph $\Gamma$ the value of the polynomial $P_n(\Gamma) \in \mathbb{Z}[q, q^{-1}]$ is determined by the fact that $P_n$ satisfies the five MOY relations it is enough to verify that $\chi(\M (\Gamma))$ satisfies the MOY relations after evaluating at $q=1$.  But we have verified this in Propositions \ref{P1}, \ref{hisonalwhatsup}, \ref{corblimey} and Corollaries \ref{P2}, \ref{P3}.
\end{proof}

\end{document}